\newcommand{\Mod}[1]{\ (\textup{mod}\ #1)}
\theoremstyle{plain} 
\newtheorem{theorem}{\indent\sc Theorem}[section]
\newtheorem{lemma}[theorem]{\indent\sc Lemma}
\newtheorem{proposition}[theorem]{\indent\sc Proposition}
\theoremstyle{definition} 
\def\address#1#2{\begingroup
\noindent\parbox[t]{7.8cm}{%
\small{\scshape\ignorespaces#1}\par\vskip1ex
\noindent\small{\itshape E-mail address}%
\/: #2\par\vskip4ex}\hfill%
\endgroup}%
\title{Normal bases for modular function fields} 
\author{
\textsc{Ja Kyung Koo, Dong Hwa Shin and Dong Sung Yoon$^*$} 
}
\date{} 
\begin{document}

\maketitle

\allowdisplaybreaks

\footnote{ 
2010 \textit{Mathematics Subject Classification}. Primary 11F03, Secondary 11G16.}
\footnote{ 
\textit{Key words and phrases}. modular functions, modular units, normal bases}
\footnote{$^*$Corresponding author.\\
\thanks{
The second named author was supported by Hankuk University of Foreign Studies Research Fund of 2016.} }

\begin{abstract}
We provide a concrete example of a normal basis for a finite Galois extension which is not abelian. More precisely, let
$\mathbb{C}(X(N))$ be the field of meromorphic functions on the modular curve
$X(N)$ of level $N$. We construct a completely free element in the extension
$\mathbb{C}(X(N))/\mathbb{C}(X(1))$ by means of
Siegel functions.
\end{abstract}

\section {Introduction}

Let $E$ be a finite Galois extension of a field $F$ with
\begin{equation*}
G=\mathrm{Gal}(E/F)=\{\sigma_1,\,\sigma_2,\,\ldots,\,\sigma_n\}.
\end{equation*}
The well-known normal basis theorem (\cite{vanderWaerden}) states that
there always exists an element $a$ of $E$ for which
\begin{equation*}
\{a^{\sigma_1},\,a^{\sigma_2},\,\ldots,\,a^{\sigma_n}\}
\end{equation*}
becomes a basis for $E$ over $F$. 
We call such a basis a \textit{normal basis} for the extension $E/F$, and say that the element $a$ is \textit{free} in $E/F$. 
In other words, $E$ is a free $F[G]$-module of rank $1$ generated by $a$.
Moreover, Blessenohl and Johnson proved in \cite{B-J} that
there is a primitive element $a$ for $E/F$ which
is free in $E/L$ for every intermediate field $L$ of $E/F$.
Such an element $a$ is said to be \textit{completely free} in the extension $E/F$.
However, not much is known so far about explicit construction of
(completely) free elements when $F$ is infinite. 
As for number fields, we refer to
\cite{Hachenberger}, \cite{J-K-S}, \cite{K-S}, \cite{Leopoldt}, \cite{Okada}, \cite{Schertz}, \cite{Taylor}.
\par
For a positive integer $N$, let
\begin{equation*}
\Gamma(N)=\{\sigma\in\mathrm{SL}_2(\mathbb{Z})~|~\sigma\equiv
I_2\Mod{N\cdot M_2(\mathbb{Z})}\}
\end{equation*}
be the principal congruence subgroup of $\mathrm{SL}_2(\mathbb{Z})$ of level $N$
which acts on the upper half-plane $\mathbb{H}=\{\tau\in\mathbb{C}~|~
\mathrm{Im}(\tau)>0\}$ by fractional linear transformations. Corresponding to $\Gamma(N)$, let
\begin{equation*}
X(N)=\Gamma(N)\backslash\mathbb{H}^*
\end{equation*}
be the modular curve of level $N$, where
$\mathbb{H}^*=\mathbb{H}\cup\mathbb{Q}\cup\{\mathrm{i}\infty\}$
(\cite[Chapter 1]{Shimura}).
We denote its meromorphic function fields by $\mathbb{C}(X(N))$.
As is well known, $\mathbb{C}(X(N))$
is a Galois extension of $\mathbb{C}(X(1))$ with
\begin{equation}\label{Galois}
\mathrm{Gal}\left(\mathbb{C}(X(N))/\mathbb{C}(X(1))\right)\simeq
\Gamma(1)/\pm\Gamma(N)\simeq\mathrm{SL}_2(\mathbb{Z}/N\mathbb{Z})/\{\pm I_2\}
\end{equation}
(\cite[Theorem 2 in Chapter 6]{Lang} and \cite[Proposition 6.1]{Shimura}).
Furthermore, if $N\geq2$, then $\mathbb{C}(X(N))$ is not
an abelian extension of $\mathbb{C}(X(1))$. In this paper,
we shall find a completely free element $g(\tau)$ in
$\mathbb{C}(X(N))/\mathbb{C}(X(1))$
in terms of Siegel functions (Theorem \ref{main}).
This gives a concrete example of a normal basis
for a nonabelian Galois extension.

\section {Siegel functions as modular functions}

We shall briefly introduce Siegel functions and their basic properties, and further
develop a couple of lemmas for later use.
\par
For a lattice $\Lambda$ in $\mathbb{C}$,
the \textit{Weierstrass $\sigma$-function} relative to $\Lambda$
is defined by
\begin{equation*}
\sigma(z;\,\Lambda)=z\prod_{\lambda\in \Lambda\setminus\{0\}}\left(1-\frac{z}{\lambda}\right)
e^{z/\lambda+(1/2)(z/\lambda)^2}\quad(z\in\mathbb{C}).
\end{equation*}
Taking logarithmic derivative, we obtain the \textit{Weierstrass
$\zeta$-function}
\begin{equation*}
\zeta(z;\,\Lambda)=\frac{\sigma'(z;\,\Lambda)}{\sigma(z;\,\Lambda)}
=\frac{1}{z}+\sum_{\lambda\in
\Lambda\setminus\{0\}}\left(\frac{1}{z-\lambda}+\frac{1}{\lambda}+
\frac{z}{\lambda^2}\right)\quad(z\in\mathbb{C}).
\end{equation*}
One can readily see that
\begin{equation*}
\zeta'(z;\,\Lambda)=-\frac{1}{z^2}+\sum_{\lambda\in\Lambda\setminus\{0\}}\left(
-\frac{1}{(z-\lambda)^2}+\frac{1}{\lambda^2}\right),
\end{equation*}
which is periodic with respect to $\Lambda$. Thus,
for each $\lambda\in\Lambda$, there is a constant $\eta(\lambda;\,\Lambda)$ such that
\begin{equation*}
\zeta(z+\lambda;\,\Lambda)-\zeta(z;\,\Lambda)=\eta(\lambda;\,\Lambda)\quad(z\in\mathbb{C}).
\end{equation*}
Now, for $\mathbf{v}=\begin{bmatrix}v_1\\v_2
\end{bmatrix}\in\mathbb{Q}^2\setminus\mathbb{Z}^2$, we define
the \textit{Siegel function} $g_\mathbf{v}(\tau)$ by
\begin{equation*}
g_\mathbf{v}(\tau)=\exp\left(-(1/2)(v_1\eta(\tau;\,[\tau,\,1])+
v_2\eta(1;\,[\tau,\,1]))
(v_1\tau+v_2)\right)
\sigma(v_1\tau+v_2;\,[\tau,\,1])\eta(\tau)^2
\quad(\tau\in\mathbb{H}),
\end{equation*}
where $[\tau,\,1]=\tau\mathbb{Z}+\mathbb{Z}$ and
$\eta(\tau)$ is the \textit{Dedekind $\eta$-function} given by
\begin{equation}\label{eta}
\eta(\tau)=\sqrt{2\pi}e^{\pi\mathrm{i}/4}q^{1/24}\prod_{n=1}^\infty
(1-q^n)\quad(q=e^{2\pi\mathrm{i}\tau},\,\tau\in\mathbb{H}).
\end{equation}
\par
Let
\begin{equation*}
\mathbf{B}_2(x)=x^2-x+\frac{1}{6}\quad (x\in\mathbb{R})
\end{equation*}
be the second Bernoulli  polynomial, and let $\langle x\rangle$ be
the fractional part of $x$ in the interval $[0,\,1)$.

\begin{proposition}\label{properties}
Let $\mathbf{v}=\begin{bmatrix}v_1\\v_2\end{bmatrix}\in(1/N)\mathbb{Z}^2\setminus\mathbb{Z}^2$
for an integer $N\geq2$. 
\begin{enumerate}
\item[\textup{(i)}] $g_\mathbf{v}(\tau)$ has the infinite product expansion 
\begin{equation*}
g_\mathbf{v}(\tau)
=-e^{\pi\mathrm{i}v_2(v_1-1)}q^{(1/2)\mathbf{B}_2(v_1)}
(1-q^{v_1}e^{2\pi\mathrm{i}v_2})\prod_{n=1}^\infty (1-q^{n+v_1}e^{2\pi\mathrm{i}v_2})
(1-q^{n-v_1}e^{-2\pi\mathrm{i}v_2})
\end{equation*}
with respect to $q=e^{2\pi\mathrm{i}\tau}$.
\item[\textup{(ii)}] We have the $q$-order formula
\begin{equation*}
\mathrm{ord}_q~g_\mathbf{v}(\tau)=\frac{1}{2}\mathbf{B}_2(\langle v_1\rangle).
\end{equation*}
\item[\textup{(iii)}] $g_\mathbf{v}(\tau)^{12N}$ belongs to $\mathbb{C}(X(N))$ and
has neither zeros nor poles on $\mathbb{H}$.
\item[\textup{(iv)}] $g_\mathbf{v}(\tau)^{12N}$ depends only on $\pm\mathbf{v}\Mod{\mathbb{Z}^2}$,
    and satisfies
\begin{equation*}
\left(g_\mathbf{v}(\tau)^{12N}\right)^\sigma=
(g_\mathbf{v}^{12N}\circ\sigma)(\tau)=
g_{\sigma^T\mathbf{v}}(\tau)^{12N}\quad(\sigma\in\mathrm{SL}_2(\mathbb{Z})),
\end{equation*}
where $\sigma^T$ stands for the transpose of $\sigma$.
\end{enumerate}
\end{proposition}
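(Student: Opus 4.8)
The four parts all descend from the single $q$-product in (i), so I would prove (i) first and then harvest the rest. The starting point is to factor the definition as $g_\mathbf{v}=\mathfrak{k}_\mathbf{v}\cdot\eta^2$, where $\mathfrak{k}_\mathbf{v}(\tau)=\exp(-\tfrac{1}{2}(v_1\eta(\tau;[\tau,1])+v_2\eta(1;[\tau,1]))(v_1\tau+v_2))\,\sigma(v_1\tau+v_2;[\tau,1])$ is the Klein form, and then to substitute two classical product expansions: the theta-type expansion of the Weierstrass $\sigma$-function for the lattice $[\tau,1]$, namely $\sigma(z;[\tau,1])=(2\pi\mathrm{i})^{-1}e^{\eta(1;[\tau,1])z^2/2}(q_z^{1/2}-q_z^{-1/2})\prod_{n\ge1}(1-q^nq_z)(1-q^n/q_z)(1-q^n)^{-2}$ with $q_z=e^{2\pi\mathrm{i}z}$ (see \cite{Lang}), together with the square of the Dedekind $\eta$-function from \eqref{eta}, $\eta(\tau)^2=2\pi\mathrm{i}\,q^{1/12}\prod_{n\ge1}(1-q^n)^2$. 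The normalization $\sqrt{2\pi}e^{\pi\mathrm{i}/4}$ built into \eqref{eta} is precisely what makes the $(2\pi\mathrm{i})^{-1}$ and $\prod(1-q^n)^{-2}$ coming from $\sigma$ cancel against $\eta^2$, leaving only the leading factor $q_z^{1/2}-q_z^{-1/2}$, the double product, and an accumulated exponential.

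The step I expect to be the main obstacle is simplifying that accumulated exponential $\exp(-\tfrac{1}{2}(v_1\eta_1+v_2\eta_2)z+\tfrac{1}{2}\eta_2 z^2)$, where $z=v_1\tau+v_2$, $\eta_1=\eta(\tau;[\tau,1])$ and $\eta_2=\eta(1;[\tau,1])$. First I would check that the coefficient of $\eta_2$ in this exponent vanishes identically, which is the reassuring fact that $g_\mathbf{v}$ is a genuine modular object and not merely quasimodular; this collapses the exponent to a multiple of the Legendre constant. Invoking the Legendre relation, which in the present normalization reads $\eta_1-\tau\eta_2=-2\pi\mathrm{i}$, the exponent becomes $\pi\mathrm{i}v_1z=\pi\mathrm{i}v_1^2\tau+\pi\mathrm{i}v_1v_2$, i.e. $q^{v_1^2/2}e^{\pi\mathrm{i}v_1v_2}$. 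Writing $q_z^{1/2}-q_z^{-1/2}=-q_z^{-1/2}(1-q_z)$ and collecting this with $q^{1/12}$ and $q_z^{-1/2}=q^{-v_1/2}e^{-\pi\mathrm{i}v_2}$ then produces exactly the sign $-1$, the phase $e^{\pi\mathrm{i}v_2(v_1-1)}$, the power $q^{\mathbf{B}_2(v_1)/2}$, and the factor $1-q^{v_1}e^{2\pi\mathrm{i}v_2}$ of (i). The only real care needed is the bookkeeping of the various $\pm$ signs, which is where errors tend to creep in.

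Part (ii) I would then read straight off (i): for $0\le v_1<1$ every factor in the product has leading coefficient $1$, so $\mathrm{ord}_q\,g_\mathbf{v}=\tfrac{1}{2}\mathbf{B}_2(v_1)=\tfrac{1}{2}\mathbf{B}_2(\langle v_1\rangle)$ at once, and the general case reduces to this after accounting for the finitely many factors $1-q^{n-v_1}e^{-2\pi\mathrm{i}v_2}$ that carry negative powers of $q$, via the elementary identity $\tfrac{1}{2}\mathbf{B}_2(v_1)+\sum_{n=1}^{\lfloor v_1\rfloor}(n-v_1)=\tfrac{1}{2}\mathbf{B}_2(\langle v_1\rangle)$ (and $g_{-\mathbf{v}}=-g_\mathbf{v}$ to dispose of $v_1<0$).

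For (iv), and then (iii), I would combine the weight-$(-1)$ transformation law of the Klein form, $\mathfrak{k}_\mathbf{v}(\gamma\tau)=(c\tau+d)^{-1}\mathfrak{k}_{\gamma^T\mathbf{v}}(\tau)$ for $\gamma=\begin{bmatrix}a&b\\c&d\end{bmatrix}\in\mathrm{SL}_2(\mathbb{Z})$, with the weight-$1$ law $\eta(\gamma\tau)^2=\varepsilon(\gamma)^2(c\tau+d)\eta(\tau)^2$, in which $\varepsilon(\gamma)$ is a $24$-th root of unity (both laws are standard, cf. \cite{Lang}). For the weight-zero product these combine to $g_\mathbf{v}\circ\gamma=\varepsilon(\gamma)^2g_{\gamma^T\mathbf{v}}$, and since $\varepsilon(\gamma)^{24}=1$ the $12N$-th power removes the root of unity, giving the transformation formula of (iv). The dependence on $\pm\mathbf{v}\bmod\mathbb{Z}^2$ follows from $\mathfrak{k}_{-\mathbf{v}}=-\mathfrak{k}_\mathbf{v}$ (oddness of $z\mapsto\sigma(z;[\tau,1])$) together with the translation law $\mathfrak{k}_{\mathbf{v}+\mathbf{w}}=(\text{root of unity})\cdot\mathfrak{k}_\mathbf{v}$ for $\mathbf{w}\in\mathbb{Z}^2$, the root of unity being a $2N$-th root because $\mathbf{v}\in(1/N)\mathbb{Z}^2$ and hence again annihilated by the $12N$-th power. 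Finally (iii): holomorphy and nonvanishing of $g_\mathbf{v}^{12N}$ on $\mathbb{H}$ are immediate from (i), since the exponential, $\sigma$ and $\eta^2$ are holomorphic and each factor $1-q^{n\pm v_1}e^{\pm2\pi\mathrm{i}v_2}$ is nonzero for $\mathbf{v}\notin\mathbb{Z}^2$ and $\tau\in\mathbb{H}$; and membership in $\mathbb{C}(X(N))$ follows because $\gamma\equiv I_2\pmod{N}$ forces $\gamma^T\mathbf{v}\equiv\mathbf{v}\pmod{\mathbb{Z}^2}$, so (iv) yields $\Gamma(N)$-invariance, while (i) and (ii) show that the $q^{1/N}$-expansion at each cusp $\gamma(\mathrm{i}\infty)$ has finite order, i.e. $g_\mathbf{v}^{12N}$ is meromorphic at the cusps.
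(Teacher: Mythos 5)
Your proposal is correct, but it relates to the paper's proof in an unusual way: the paper does not actually prove Proposition \ref{properties} at all --- its ``proof'' is four citations to Kubert--Lang \cite{K-L} (K\,4 on p.~29 for (i), the order formula on p.~31 for (ii), Theorem 1.2 of Chapter 2 for (iii), Proposition 1.3 of Chapter 2 for (iv)). What you wrote is in effect a reconstruction of the arguments behind those citations, since Kubert and Lang likewise define the Siegel function as Klein form times $\eta^2$ and derive the $q$-product from the Jacobi-type expansion of $\sigma$ plus the Legendre relation. I checked the two spots where such a computation usually goes wrong, and you have both right: with $\eta_1=\eta(\tau;[\tau,1])$, $\eta_2=\eta(1;[\tau,1])$ and $\mathrm{Im}(\tau)>0$, the correctly oriented Legendre relation is indeed $\eta_1-\tau\eta_2=-2\pi\mathrm{i}$, so the exponent collapses to $-\frac{1}{2}v_1z(\eta_1-\tau\eta_2)=\pi\mathrm{i}v_1z$ and yields exactly the factor $q^{(1/2)\mathbf{B}_2(v_1)}$ and the prefactor $-e^{\pi\mathrm{i}v_2(v_1-1)}$; and the nonstandard normalization $\sqrt{2\pi}e^{\pi\mathrm{i}/4}$ built into (\ref{eta}) does precisely cancel the $(2\pi\mathrm{i})^{-1}$ and $\prod_{n\geq1}(1-q^n)^{-2}$ from the $\sigma$-expansion. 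Your reductions of (ii)--(iv) to (i) and to the Klein-form laws (oddness, the $\mathbb{Z}^2$-translation cocycle, which is a $2N$-th root of unity and hence killed by the $12N$-th power, and $\mathfrak{k}_\mathbf{v}(\gamma\tau)=(c\tau+d)^{-1}\mathfrak{k}_{\gamma^T\mathbf{v}}(\tau)$ combined with $\eta(\gamma\tau)^2=\varepsilon(\gamma)^2(c\tau+d)\eta(\tau)^2$) are also sound, including the identity $\frac{1}{2}\mathbf{B}_2(v_1)+\sum_{n=1}^{\lfloor v_1\rfloor}(n-v_1)=\frac{1}{2}\mathbf{B}_2(\langle v_1\rangle)$, which follows by iterating $\frac{1}{2}\mathbf{B}_2(x-1)=\frac{1}{2}\mathbf{B}_2(x)+(1-x)$. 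Two minor wording points: the coefficient of $\eta_2$ in the exponent vanishes only after you rewrite the exponent in the basis $\{\eta_1-\tau\eta_2,\,\eta_2\}$ (as literally written it equals $\frac{1}{2}v_1\tau z$); and in (iii) you should note that the product in (i) converges locally uniformly on $\mathbb{H}$ (or argue straight from the definition, since $\sigma(z;[\tau,1])$ vanishes only for $z\in[\tau,1]$ and $\eta$ is nonvanishing) before reading off holomorphy and nonvanishing. As for what each route buys: the paper's citations are the standard economical choice; your derivation makes the proposition self-contained and, more importantly, verifies that the conventions for $\sigma$, for the Legendre relation, and for the paper's normalization of $\eta$ are mutually consistent --- which is the genuine risk when these classical facts are assembled from different sources.
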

\begin{proof}
\begin{enumerate}
\item[(i)] See \cite[K 4 in p. 29]{K-L}.
\item[(ii)] See \cite[p. 31]{K-L}.
\item[(iii)] See \cite[Theorem 1.2 in Chapter 2]{K-L}.
\item[(iv)] See \cite[Proposition 1.3 in Chapter 2]{K-L}.
\end{enumerate}
\end{proof}

For a positive integer $N$, let $\Gamma_1(N)$ be the congruence subgroup of $\mathrm{SL}_2(\mathbb{Z})$ defined by
\begin{equation*}
\Gamma_1(N)=\left\{\sigma\in\mathrm{SL}_2(\mathbb{Z})~|~
\sigma\equiv\begin{bmatrix}1&\mathrm{*}\\0&1\end{bmatrix}\Mod{N\cdot M_2(\mathbb{Z})}\right\}.
\end{equation*}
Now, we let $N\geq2$, and consider the function 
\begin{equation*}
g(\tau)=g_{\left[\begin{smallmatrix}
0\\1/N\end{smallmatrix}\right]}(\tau)^{-12N\ell}
g_{\left[\begin{smallmatrix}
1/N\\0\end{smallmatrix}\right]}(\tau)^{-12Nm}
\end{equation*}
where $\ell$ and $m$ are integers such that $\ell>m>0$.
Then we see from Proposition \ref{properties} (iii) that $g(\tau)$ belongs to $\mathbb{C}(X(N))$.

\begin{lemma}\label{minimumorder}
We have
\begin{equation*}
\mathrm{ord}_q\left(\frac{g(\tau)^\sigma}{g(\tau)}\right)\geq0
\quad\textrm{for all}~\sigma\in\mathrm{SL}_2(\mathbb{Z}).
\end{equation*}
The equality holds if and only if $\sigma\in\pm\Gamma_1(N)$.
\end{lemma}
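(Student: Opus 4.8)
The plan is to convert the statement into a computation of $q$-orders and then into an elementary inequality for the second Bernoulli polynomial $\mathbf{B}_2$. Writing $\sigma=\left[\begin{smallmatrix}a&b\\c&d\end{smallmatrix}\right]\in\mathrm{SL}_2(\mathbb{Z})$, I first record
\[
\sigma^T\begin{bmatrix}0\\1/N\end{bmatrix}=\begin{bmatrix}c/N\\d/N\end{bmatrix},
\qquad
\sigma^T\begin{bmatrix}1/N\\0\end{bmatrix}=\begin{bmatrix}a/N\\b/N\end{bmatrix},
\]
so Proposition \ref{properties}(iv) gives $g(\tau)^\sigma=g_{\left[\begin{smallmatrix}c/N\\d/N\end{smallmatrix}\right]}(\tau)^{-12N\ell}\,g_{\left[\begin{smallmatrix}a/N\\b/N\end{smallmatrix}\right]}(\tau)^{-12Nm}$. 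Because the $q$-order formula in Proposition \ref{properties}(ii) depends only on the first coordinate of the index, applying it to $g$ and to $g^\sigma$ and subtracting yields
\[
\mathrm{ord}_q\!\left(\frac{g(\tau)^\sigma}{g(\tau)}\right)
=-6N\ell\bigl(\mathbf{B}_2(\langle c/N\rangle)-\mathbf{B}_2(0)\bigr)
-6Nm\bigl(\mathbf{B}_2(\langle a/N\rangle)-\mathbf{B}_2(\langle 1/N\rangle)\bigr).
\]
Hence only the residues of $a$ and $c$ modulo $N$ matter.

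I would then set $f(x)=x(1-x)=\tfrac16-\mathbf{B}_2(x)$, which is nonnegative on $[0,1]$, vanishes exactly at $0$ and $1$, is symmetric about $\tfrac12$, and --- being concave --- attains over the grid $\{k/N:1\le k\le N-1\}$ its minimum $f(1/N)=(N-1)/N^2$ precisely at $k\equiv\pm1\Mod{N}$. Substituting $\mathbf{B}_2=\tfrac16-f$ and using $f(0)=0$ turns the order into
\[
\mathrm{ord}_q\!\left(\frac{g(\tau)^\sigma}{g(\tau)}\right)
=6N\bigl(\ell\,f(\langle c/N\rangle)+m\,f(\langle a/N\rangle)-m\,f(1/N)\bigr),
\]
so the whole lemma reduces to bounding this bracket from below and locating its zeros.

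The argument then splits on the residue of $c$. If $c\equiv0\Mod{N}$ the first term drops out, and from $ad-bc=1$ we get $ad\equiv1\Mod{N}$, so $a\not\equiv0\Mod{N}$ and $f(\langle a/N\rangle)\ge f(1/N)$; the bracket is therefore $\ge0$, with equality exactly when $a\equiv\pm1\Mod{N}$. Since $c\equiv0$ and $ad\equiv1$ then force $d\equiv a\equiv\pm1$, equality holds precisely for $\sigma\equiv\pm\left[\begin{smallmatrix}1&*\\0&1\end{smallmatrix}\right]\Mod{N}$, i.e.\ $\sigma\in\pm\Gamma_1(N)$, and the converse is immediate. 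If instead $c\not\equiv0\Mod{N}$, then $f(\langle c/N\rangle)\ge f(1/N)$ and $f(\langle a/N\rangle)\ge0$, so the bracket is at least $(\ell-m)f(1/N)>0$, giving strict positivity with no equality.

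The main obstacle is exactly this second case: the term $m\,f(\langle a/N\rangle)-m\,f(1/N)$ can be negative and on its own would break the inequality. The hypothesis $\ell>m>0$ is what saves it, since the positive contribution $\ell\,f(\langle c/N\rangle)\ge\ell\,f(1/N)$ more than compensates the worst deficit $-m\,f(1/N)$, leaving a surplus $(\ell-m)f(1/N)>0$. Thus $\ell>m$ is not a mere convenience but precisely the condition ensuring strict inequality off $\pm\Gamma_1(N)$.
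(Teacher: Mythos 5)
Your proposal is correct and takes essentially the same approach as the paper: the identical reduction via Proposition \ref{properties} (iv) and (ii) to an inequality among the values $\mathbf{B}_2(\langle c/N\rangle)$, $\mathbf{B}_2(\langle a/N\rangle)$, followed by the same use of $ad-bc=1$ to translate the equality case into $\sigma\in\pm\Gamma_1(N)$. The only difference is expository: where the paper justifies the key inequality by appealing to $\ell>m>0$ and Figure \ref{figure1}, you make that step explicit through the substitution $f(x)=x(1-x)=\tfrac{1}{6}-\mathbf{B}_2(x)$ and a case split on $c$ modulo $N$, which is a rigorous writing-out of the same idea rather than a different route.
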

\begin{proof}
Let $\sigma=\begin{bmatrix}a&b\\c&d\end{bmatrix}\in\mathrm{SL}_2(\mathbb{Z})$.
Note that $a\equiv c\equiv0\Mod{N}$ is impossible. 
We get by Proposition \ref{properties} (iv) and (ii) that
\begin{eqnarray*}
\mathrm{ord}_q\left(\frac{g(\tau)^\sigma}{g(\tau)}\right)&=&
\mathrm{ord}_q\left(\frac{g_{\left[\begin{smallmatrix}
c/N\\d/N\end{smallmatrix}\right]}(\tau)^{-12N\ell}
g_{\left[\begin{smallmatrix}
a/N\\b/N\end{smallmatrix}\right]}(\tau)^{-12Nm}}
{g_{\left[\begin{smallmatrix}
0\\1/N\end{smallmatrix}\right]}(\tau)^{-12N\ell}
g_{\left[\begin{smallmatrix}
1/N\\0\end{smallmatrix}\right]}(\tau)^{-12Nm}}\right)\\
&=&6N\left(\ell\mathbf{B}_2(0)+m\mathbf{B}_2(1/N)-\ell\mathbf{B}_2(\langle c/N\rangle)
-m\mathbf{B}_2(\langle a/N\rangle)\right).
\end{eqnarray*}
Then we deduce
from the fact $\ell>m>0$ and Figure \ref{figure1} that
\begin{equation*}
\mathrm{ord}_q\left(\frac{g(\tau)^\sigma}{g(\tau)}\right)\geq0
\end{equation*}
with equality if and only if
\begin{equation}\label{equality}
\langle c/N\rangle=0\quad\textrm{and}\quad\langle a/N\rangle=1/N~\textrm{or}~1-1/N.
\end{equation}
Moreover, by the relation $\det(\sigma)=ad-bc=1$ one can express the condition (\ref{equality}) as
\begin{equation*}
\sigma\equiv\pm\begin{bmatrix}1&\mathrm{*}\\0&1\end{bmatrix}\Mod{N\cdot M_2(\mathbb{Z})}.
\end{equation*}
This proves the lemma.
\end{proof}

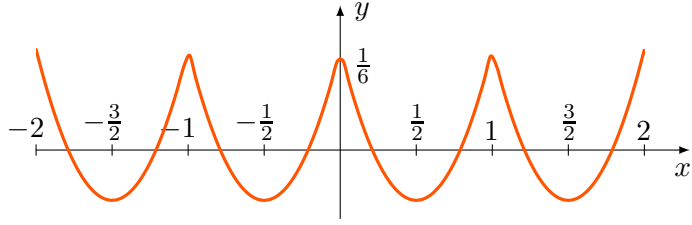
\begin{figure}[H]
\centering
\definecolor{ffvvqq}{rgb}{1.,0.3333333333333333,0.}
\begin{tikzpicture}[line cap=round,line join=round,>=latex,x=2.0cm,y=8.0cm]
\draw[->,color=black] (-2.,0.) -- (2.3,0.);
\foreach \x in {-2.,-1.5,-1.,-0.5,0.5,1.,1.5,2.}
\draw[shift={(\x,0)},color=black] (0pt,2pt) -- (0pt,-2pt);
\draw[color=black] (2.13,-0.06) node [anchor=south west] {$x$};
\draw[color=black] (-0.76,0) node [anchor=south west] {$-\frac{1}{2}$};
\draw[color=black] (-1.26,0) node [anchor=south west] {$-1$};
\draw[color=black] (-1.76,0) node [anchor=south west] {$-\frac{3}{2}$};
\draw[color=black] (-2.26,0) node [anchor=south west] {$-2$};
\draw[color=black] (0.38,0) node [anchor=south west] {$\frac{1}{2}$};
\draw[color=black] (0.88,0) node [anchor=south west] {$1$};
\draw[color=black] (1.38,0) node [anchor=south west] {$\frac{3}{2}$};
\draw[color=black] (1.88,0) node [anchor=south west] {$2$};
\draw[->,color=black] (0.,-0.11358885017421595) -- (0.,0.24);
\foreach \y in {}
\draw[shift={(0,\y)},color=black] (2pt,0pt) -- (-2pt,0pt);
\draw[color=black] (0.025806451612903226,0.22874564459930322) node [anchor=west] {$y$};
\draw[color=black] (0.025806451612903226,0.14) node [anchor=west] {$\frac{1}{6}$};
\clip(-2.,-0.11358885017421595) rectangle (2.,0.24);
\draw[line width=1.2pt,color=ffvvqq,smooth,samples=100,domain=-2:2] plot(\x,{((\x)-floor((\x)))^(2.0)-((\x)-floor((\x)))+1.0/6.0});
\end{tikzpicture}
\caption{The graph of $y=\mathbf{B}_2(\langle x\rangle)$}\label{figure1}
\end{figure}

Let $\mathbb{R}_+$ denote the set of positive real numbers.

\begin{lemma}\label{evaluation}
Given any $\varepsilon\in\mathbb{R}_+$, we can take $r\in\mathbb{R}_+$
and an integer $m$ large enough so that
\begin{equation*}
\left|\frac{g^\sigma(r\mathrm{i})}{g(r\mathrm{i})}
\right|<\varepsilon\quad\textrm{for all}~\sigma
\in\mathrm{SL}_2(\mathbb{Z})\setminus\pm\Gamma(N).
\end{equation*}
\end{lemma}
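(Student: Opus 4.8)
The plan is to evaluate the ratio along the imaginary axis $\tau=r\mathrm{i}$, where $q=e^{2\pi\mathrm{i}\tau}=e^{-2\pi r}$ is a \emph{real} number in $(0,1)$ that tends to $0$ as $r\to\infty$, and to read off the size of each Siegel factor from the product expansion in Proposition~\ref{properties}(i). Writing $\sigma=\left[\begin{smallmatrix}a&b\\c&d\end{smallmatrix}\right]$ and using Proposition~\ref{properties}(iv) exactly as in the proof of Lemma~\ref{minimumorder}, I would factor
\begin{equation*}
\frac{g^\sigma(\tau)}{g(\tau)}=R_\ell(\tau)^{12N\ell}\,R_m(\tau)^{12Nm},\qquad
R_\ell=\frac{g_{\left[\begin{smallmatrix}0\\1/N\end{smallmatrix}\right]}}{g_{\left[\begin{smallmatrix}c/N\\d/N\end{smallmatrix}\right]}},\quad
R_m=\frac{g_{\left[\begin{smallmatrix}1/N\\0\end{smallmatrix}\right]}}{g_{\left[\begin{smallmatrix}a/N\\b/N\end{smallmatrix}\right]}}.
\end{equation*}
Everything rests on the elementary estimate $|1-q^{t}e^{\mathrm{i}\theta}|\geq 1-q^{t}$ for $t>0$, with equality precisely when $e^{\mathrm{i}\theta}=1$. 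Applied factor-by-factor to the product expansion, it shows that among Siegel functions sharing a common first entry, the one with integral second entry has the \emph{strictly smallest} absolute value at $r\mathrm{i}$.

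I would split $\mathrm{SL}_2(\mathbb{Z})\setminus\pm\Gamma(N)$ into $S_1=\pm\Gamma_1(N)\setminus\pm\Gamma(N)$ and its complement $S_2=\mathrm{SL}_2(\mathbb{Z})\setminus\pm\Gamma_1(N)$. For $\sigma\in S_1$ one has $c\equiv0$ and $a\equiv\pm1\Mod N$ with $b\not\equiv0\Mod N$; thus $\left[\begin{smallmatrix}c/N\\d/N\end{smallmatrix}\right]\equiv\pm\left[\begin{smallmatrix}0\\1/N\end{smallmatrix}\right]\Mod{\mathbb{Z}^2}$ forces $R_\ell^{12N}=1$ by Proposition~\ref{properties}(iv), so that (taking the representative $a\equiv1$; the case $a\equiv-1$ is symmetric)
\begin{equation*}
\left|\frac{g^\sigma(r\mathrm{i})}{g(r\mathrm{i})}\right|
=\left(\frac{|g_{\left[\begin{smallmatrix}1/N\\0\end{smallmatrix}\right]}(r\mathrm{i})|}{|g_{\left[\begin{smallmatrix}1/N\\b/N\end{smallmatrix}\right]}(r\mathrm{i})|}\right)^{12Nm}.
\end{equation*}
By the estimate above the base lies strictly in $(0,1)$ for \emph{every} $r>0$, so for any fixed $r$ these finitely many quantities drop below $\varepsilon$ once $m$ is large.

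For $\sigma\in S_2$ Lemma~\ref{minimumorder} gives $\mathrm{ord}_q(g^\sigma/g)>0$, whence $g^\sigma(r\mathrm{i})/g(r\mathrm{i})\to0$ as $r\to\infty$ for fixed $\ell,m$; the real task is to make the choice of $r$ compatible with letting $m$ grow. I would rewrite $|g^\sigma/g|=|R_\ell|^{12N(\ell-m)}\bigl(|R_\ell|\,|R_m|\bigr)^{12Nm}$ and compare the leading $q$-exponents through the values of $\mathbf{B}_2(\langle\cdot\rangle)$ recorded in Figure~\ref{figure1}, checking that for all sufficiently large $r$ one has $|R_\ell(r\mathrm{i})|<1$ and $|R_\ell(r\mathrm{i})R_m(r\mathrm{i})|\leq1$ \emph{uniformly} over the finite set $S_2$. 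Granting this, each $S_2$-ratio stays below $\varepsilon$ for the fixed $r$ and all large $m$, and combining with the previous paragraph proves the lemma: fix $r$ large enough for $S_2$, then take $m$ large enough for $S_1$.

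The main obstacle is exactly this uniform choice of $r$, since the $S_1$-terms want $r$ to remain moderate while $m\to\infty$ whereas the $S_2$-terms want $r$ large, and a single pair $(r,m)$ must serve both. The delicate point inside $S_2$ is the coset of $\left[\begin{smallmatrix}0&-1\\1&0\end{smallmatrix}\right]$, together with its relatives having $a\equiv0$ and $c\equiv\pm1\Mod N$: here $|R_\ell R_m|$ equals $1$ rather than being $<1$ (the two first entries match and the key inequality is an equality), so raising to the $m$-th power yields no decay at all. In this borderline family the separate prefactor $|R_\ell(r\mathrm{i})|^{12N(\ell-m)}$ must do the work; it tends to $0$ as $r\to\infty$ because, with $c\not\equiv0$, the numerator $g_{\left[\begin{smallmatrix}0\\1/N\end{smallmatrix}\right]}$ has strictly larger $q$-order ($\tfrac1{12}=\tfrac12\mathbf{B}_2(0)$) than the denominator $g_{\left[\begin{smallmatrix}c/N\\d/N\end{smallmatrix}\right]}$, and $\ell>m$ makes the exponent positive. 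Verifying that these comparisons hold simultaneously for all of $S_2$ is the step I expect to require the most care.
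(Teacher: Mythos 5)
Your proposal is correct, and its skeleton is the same as the paper's proof: the same splitting into $S_1=\pm\Gamma_1(N)\setminus\pm\Gamma(N)$ and $S_2=\mathrm{SL}_2(\mathbb{Z})\setminus\pm\Gamma_1(N)$, the same treatment of $S_1$ (cancel the $\ell$-part via Proposition \ref{properties} (iv), then apply the factor-wise inequality $|1-x|\le|1-x\zeta|$ and let $m\to\infty$), and the same order of choices ($r$ first for $S_2$, then $m$ for $S_1$). What you do differently --- and it is a genuine improvement --- is to confront the uniformity problem head-on. The paper's own $S_2$-step invokes only $\mathrm{ord}_q(g^\sigma/g)>0$ and chooses $r_\sigma$ for the \emph{given} function $g$, i.e.\ for fixed $\ell$ and $m$; its $S_1$-step then enlarges $m$, which changes $g$ and, read literally, invalidates the earlier choice of $r$. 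This is not a cosmetic issue, since Theorem \ref{main} uses the lemma in the stronger form ``there exist $r$ and $M$ valid for all $\ell>m>M$''. Your factorization $|g^\sigma/g|=|R_\ell|^{12N(\ell-m)}\bigl(|R_\ell||R_m|\bigr)^{12Nm}$, together with your analysis of the coset of $\left[\begin{smallmatrix}0&-1\\1&0\end{smallmatrix}\right]$, is exactly the missing repair. To finish the step you flagged as requiring the most care, the clean claim to verify, coset by coset in $S_2$, is: for $r$ large, both $|R_\ell|\le1$ and $|R_\ell R_m|\le1$, and moreover $\min\bigl(|R_\ell|,|R_\ell R_m|\bigr)\to0$ as $r\to\infty$. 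Indeed $|R_\ell|\to0$ whenever $c\not\equiv0\Mod{N}$ (this covers your borderline family, where $|R_\ell R_m|\le 1$ gives no decay), while if $c\equiv0\Mod{N}$ then $\sigma\notin\pm\Gamma_1(N)$ forces $a\not\equiv0,\pm1\Mod{N}$, so $\mathrm{ord}_q(R_\ell R_m)>0$ and $|R_\ell R_m|\to0$ --- this is a case your sketch omits, and it is needed because there $|R_\ell|$ tends only to $|1-\zeta_N|/|1-\zeta_N^{d}|<1$, not to $0$. Granting that claim, $|g^\sigma/g|\le\min\bigl(|R_\ell|,|R_\ell R_m|\bigr)^{12N}<\varepsilon$ uniformly in $\ell>m\ge1$, since a product of two factors each of absolute value at most $1$ is at most either factor; this also corrects the one inaccurate inference in your write-up (granting merely $|R_\ell|<1$ and $|R_\ell R_m|\le1$ yields $|g^\sigma/g|<1$, not $<\varepsilon$). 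Finally, both you and the paper should note that $S_2$ is infinite but that, by Proposition \ref{properties} (iv), $g^\sigma/g$ runs over only finitely many functions (one per class of $\sigma$ modulo $\pm\Gamma(N)$), which is what legitimizes taking maxima or uniform thresholds over $S_2$.
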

\begin{proof}
First, consider the case where $\sigma\not\in\pm\Gamma_1(N)$.
Then, we obtain by Lemma \ref{minimumorder} that
\begin{equation*}
\mathrm{ord}_q\left(\frac{g(\tau)^\sigma}{g(\tau)}\right)>0,
\end{equation*}
which implies that $g(\tau)^\sigma/g(\tau)$ has a zero
at the cusp $\mathrm{i\infty}$.
Hence we can take $r_\sigma\in\mathbb{R}_+$ sufficiently large so as to have
\begin{equation*}
\left|\frac{g^\sigma(r_\sigma\mathrm{i})}{g(r_\sigma\mathrm{i})}\right|<\varepsilon.
\end{equation*}
Set
\begin{equation*}
r=\max\left\{r_\sigma~|~\sigma\in\mathrm{SL}_2(\mathbb{Z})\setminus\pm\Gamma_1(N)\right\}.
\end{equation*}
\par
Second, let $\sigma\in\pm\Gamma_1(N)\setminus\pm\Gamma(N)$, and so $\sigma\equiv\pm
\begin{bmatrix}1&b\\0&1\end{bmatrix}\Mod{N\cdot M_2(\mathbb{Z})}$ for some $b\in\mathbb{Z}$
with $b\not\equiv0\Mod{N}$. We then derive that
\begin{eqnarray*}
\left|\frac{g^\sigma(r\mathrm{i})}{g(r\mathrm{i})}\right|&=&
\left|\frac{g_{\left[\begin{smallmatrix}
0\\1/N\end{smallmatrix}\right]}(r\mathrm{i})^{-12N\ell}
g_{\left[\begin{smallmatrix}
1/N\\b/N\end{smallmatrix}\right]}(r\mathrm{i})^{-12Nm}}
{g_{\left[\begin{smallmatrix}
0\\1/N\end{smallmatrix}\right]}(r\mathrm{i})^{-12N\ell}
g_{\left[\begin{smallmatrix}
1/N\\0\end{smallmatrix}\right]}(r\mathrm{i})^{-12Nm}}
\right|\quad\textrm{by Proposition \ref{properties} (iv)}\\
&=&\left|\frac{g_{\left[\begin{smallmatrix}
1/N\\0\end{smallmatrix}\right]}(r\mathrm{i})}
{g_{\left[\begin{smallmatrix}
1/N\\b/N\end{smallmatrix}\right]}(r\mathrm{i})}
\right|^{12Nm}\\
&=&\left|\frac{1-R^{1/N}}{1-R^{1/N}\zeta_N^b}\right|^{12Nm}
    \prod_{n=1}^\infty\left|\frac{(1-R^{n+1/N})(1-R^{n-1/N})}
    {(1-R^{n+1/N}\zeta_N^b)(1-R^{n-u}\zeta_N^{-b})}\right|^{12Nm}\\
    &&\textrm{by Proposition \ref{properties} (i), where $R=e^{-2\pi r}$
    and $\zeta_N=e^{2\pi\mathrm{i}/N}$}\\
&\leq&\left|\frac{1-R^{1/N}}{1-R^{1/N}\zeta_N^b}\right|^{12Nm}\\
&&\textrm{because $\left|1-x\right|\leq\left|1-x\zeta\right|$
for any $x\in\mathbb{R}_+$ with $x<1$ and any root of unity $\zeta$}.
\end{eqnarray*}
Therefore, if $m$ is sufficiently large, then we attain
\begin{equation*}
\left|\frac{g^\sigma(r\mathrm{i})}{g(r\mathrm{i})}\right|~<~\varepsilon.
\end{equation*}
This completes the proof.
\end{proof}

\section {Completely free elements
in modular function fields}

Let $N\geq2$. In this section, we shall show that
\begin{equation*}
g(\tau)=g_{\left[\begin{smallmatrix}
0\\1/N\end{smallmatrix}\right]}(\tau)^{-12N\ell}
g_{\left[\begin{smallmatrix}
1/N\\0\end{smallmatrix}\right]}(\tau)^{-12Nm}\quad\textrm{with}~
\ell>m>0
\end{equation*}plays an important role as completely normal elements in
modular function field extensions.

\begin{proposition}\label{primitive}
The function $g(\tau)$ generates $\mathbb{C}(X(N))$ over $\mathbb{C}(X(1))$.
\end{proposition}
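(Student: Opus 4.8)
The plan is to argue by Galois theory. Since $\mathbb{C}(X(N))/\mathbb{C}(X(1))$ is Galois with group $G\simeq\mathrm{SL}_2(\mathbb{Z}/N\mathbb{Z})/\{\pm I_2\}$ by \eqref{Galois}, the intermediate field $\mathbb{C}(X(1))(g)$ is exactly the fixed field of the stabilizer $H=\{\overline{\sigma}\in G\mid g^\sigma=g\}$, and $g$ generates $\mathbb{C}(X(N))$ if and only if $H$ is trivial. So I would fix $\sigma=\begin{bmatrix}a&b\\c&d\end{bmatrix}\in\mathrm{SL}_2(\mathbb{Z})$ with $g^\sigma=g$ (as functions on $\mathbb{H}$) and aim to force $\sigma\in\pm\Gamma(N)$.

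The first step is to compare orders at the cusp $\mathrm{i}\infty$. From $g^\sigma=g$ we get $\mathrm{ord}_q(g^\sigma/g)=0$, so Lemma \ref{minimumorder} immediately gives $\sigma\in\pm\Gamma_1(N)$, i.e. $\sigma\equiv\pm\begin{bmatrix}1&b\\0&1\end{bmatrix}\Mod{N\cdot M_2(\mathbb{Z})}$. Because $g$ is built from the $12N$-th powers $g_\mathbf{v}^{12N}$, which by Proposition \ref{properties}(iv) depend only on $\pm\mathbf{v}\Mod{\mathbb{Z}^2}$, the matrix $-I_2$ acts trivially on $g$; hence I may take the representative $\sigma\equiv\begin{bmatrix}1&b\\0&1\end{bmatrix}\Mod{N}$. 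Feeding this into the transformation formula of Proposition \ref{properties}(iv) and cancelling the common factor $g_{[0,\,1/N]^T}^{-12N\ell}$, the equation $g^\sigma=g$ collapses to the single identity $g_{[1/N,\,b/N]^T}^{12Nm}=g_{[1/N,\,0]^T}^{12Nm}$.

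The second step is to show this identity forces $N\mid b$. The ratio $f(\tau)=g_{[1/N,\,b/N]^T}(\tau)/g_{[1/N,\,0]^T}(\tau)$ is holomorphic and nowhere vanishing on $\mathbb{H}$ by Proposition \ref{properties}(iii), so $f^{12Nm}\equiv1$ would force $f$ to be a constant root of unity on the connected domain $\mathbb{H}$; it therefore suffices to check that $f$ is non-constant whenever $N\nmid b$. For this I would expand $f$ as a power series in $q^{1/N}$ via the product formula of Proposition \ref{properties}(i): the numerator and denominator share the same leading power $q^{\mathbf{B}_2(1/N)/2}$, and a short computation shows that the coefficient of the lowest positive power of $q^{1/N}$ in $f$ is a nonzero multiple of $1-\zeta_N^{\,b}$, where $\zeta_N=e^{2\pi\mathrm{i}/N}$. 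Since $N\nmid b$ gives $\zeta_N^{\,b}\neq1$, this coefficient is nonzero and $f$ is genuinely non-constant, contradicting $f^{12Nm}\equiv1$. Hence $N\mid b$, so $\sigma\in\pm\Gamma(N)$, the stabilizer $H$ is trivial, and $g$ generates $\mathbb{C}(X(N))$ over $\mathbb{C}(X(1))$.

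The step I expect to be the main obstacle is the boundary case $N=2$ in this last coefficient extraction: there the explicit factor $1-q^{1/N}\zeta_N^{\,b}$ and the first product factor $1-q^{\,n-1/N}\zeta_N^{-b}$ (at $n=1$) both contribute to the power $q^{1/2}$, so the coefficient cannot be read off a single factor and must be computed by hand for the only nontrivial residue $b\equiv1$; I would verify directly that it remains nonzero. As an alternative to the whole second step, one can invoke the analytic estimate of Lemma \ref{evaluation} to exclude $\sigma\notin\pm\Gamma(N)$ in one stroke, since $g^\sigma=g$ would force $|g^\sigma(r\mathrm{i})/g(r\mathrm{i})|=1$ against the bound $<\varepsilon<1$; I nonetheless prefer the $q$-expansion route because it keeps the conclusion valid for every pair $\ell>m>0$ rather than only for $m$ large.
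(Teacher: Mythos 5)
Your proof is correct, and its first half coincides exactly with the paper's: both deduce $\sigma\in\pm\Gamma_1(N)$ from $\mathrm{ord}_q(g^\sigma/g)=0$ via Lemma \ref{minimumorder}. Where you genuinely diverge is in forcing $b\equiv0\Mod{N}$. The paper applies $S=\left[\begin{smallmatrix}0&-1\\1&0\end{smallmatrix}\right]$ to both sides of $g=g^\sigma$ and compares leading $q$-orders a second time: by Proposition \ref{properties} (ii) and (iv), $\mathrm{ord}_q\,g^{S}=-6N\ell\mathbf{B}_2(1/N)-6Nm\mathbf{B}_2(0)$, while $\mathrm{ord}_q\,g^{\sigma S}=-6N\ell\mathbf{B}_2(\langle d/N\rangle)-6Nm\mathbf{B}_2(\langle b/N\rangle)$; since $\sigma\in\pm\Gamma_1(N)$ gives $d\equiv\pm1\Mod{N}$ and $\mathbf{B}_2(\langle x\rangle)$ is symmetric about $1/2$, the $\ell$-terms cancel and the equality forces $\mathbf{B}_2(\langle b/N\rangle)=\mathbf{B}_2(0)$, hence $N\mid b$ by the strict maximality of $\mathbf{B}_2(\langle x\rangle)$ at $0$ (Figure \ref{figure1}). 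In effect the paper inspects the cusp $0$ instead of $\mathrm{i}\infty$ and never needs more than leading orders. You instead stay at $\mathrm{i}\infty$ and extract the \emph{subleading} Fourier coefficient of $f=g_{[1/N,\,b/N]^T}/g_{[1/N,\,0]^T}$, combined with the observation that a nowhere-vanishing holomorphic function on the connected domain $\mathbb{H}$ taking values in the $12Nm$-th roots of unity must be constant. This works: your coefficient $1-\zeta_N^{\,b}$ is correct for $N\geq3$, and the $N=2$ case you flagged does survive, since the two contributions to the coefficient of $q^{1/2}$, namely $1-\zeta_2^{\,b}$ and $1-\zeta_2^{-b}$, both equal $2$ and add rather than cancel. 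The trade-off is that your route costs a coefficient computation and an edge-case check that the paper's second order-comparison avoids entirely, while buying nothing extra in generality: both arguments are valid for every pair $\ell>m>0$. Your closing remark is also on target --- falling back on Lemma \ref{evaluation} would establish the proposition only for $m$ sufficiently large, which would weaken Theorem \ref{submain}, so preferring the $q$-expansion route is the right instinct.
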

\begin{proof}
Suppose that $\sigma=\begin{bmatrix}a&b\\c&d\end{bmatrix}\in\mathrm{SL}_2(\mathbb{Z})$ leaves
$g(\tau)$ fixed. In particular, since
$\mathrm{ord}_q~g(\tau)=\mathrm{ord}_q~g(\tau)^\sigma$,
we get by Lemma \ref{minimumorder} that
$\sigma\in\pm\Gamma_1(N)$.
Furthermore, we see by Proposition \ref{properties} (iv) and (ii) that
\begin{eqnarray*}
\mathrm{ord}_q~g(\tau)^{\left[\begin{smallmatrix}0&-1\\1&0\end{smallmatrix}\right]}
&=&\mathrm{ord}_q\left(g_{\left[\begin{smallmatrix}
1/N\\0\end{smallmatrix}\right]}(\tau)^{-12N\ell}
g_{\left[\begin{smallmatrix}
0\\-1/N\end{smallmatrix}\right]}(\tau)^{-12Nm}\right)\\
&=&-6N\ell\mathbf{B}_2(1/N)-6Nm\mathbf{B}_2(0)\\
&=&\mathrm{ord}_q~\left(g(\tau)^\sigma\right)^{\left[\begin{smallmatrix}0&-1\\1&0\end{smallmatrix}\right]}\\
&=&\mathrm{ord}_q~g(\tau)^{\left[\begin{smallmatrix}b&-a\\d&-c\end{smallmatrix}\right]}\\
&=&\mathrm{ord}_q\left(g_{\left[\begin{smallmatrix}
d/N\\-c/N\end{smallmatrix}\right]}(\tau)^{-12N\ell}
g_{\left[\begin{smallmatrix}
b/N\\-a/N\end{smallmatrix}\right]}(\tau)^{-12Nm}\right)\\
&=&-6N\ell\mathbf{B}_2(\langle d/N\rangle)-6Nm\mathbf{B}_2(\langle b/N\rangle).
\end{eqnarray*}
Thus we obtain $b\equiv0\Mod{N}$, and hence $\sigma\in\pm\Gamma(N)$. Therefore,
we conclude by (\ref{Galois}) and the Galois theory that $g(\tau)$ generates $\mathbb{C}(X(N))$ over $\mathbf{C}(X(1))$.
\end{proof}

\begin{theorem}\label{submain}
Let $X^0(N)$ be the modular
curve for the congruence subgroup
\begin{equation*}
\Gamma^0(N)=\left\{\sigma\in\mathrm{SL}_2(\mathbb{Z})~|~
\sigma\equiv\begin{bmatrix}\mathrm{*}&0\\\mathrm{*}&\mathrm{*}\end{bmatrix}\Mod{N\cdot M_2(\mathbb{Z})}\right\}
\end{equation*}
with meromorphic function field $\mathbb{C}(X^0(N))$.
Then, $g(\tau)$
is completely free in $\mathbb{C}(X(N))/
\mathbb{C}(X^0(N))$.
\end{theorem}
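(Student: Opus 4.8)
The plan is to translate complete freeness into a finite family of linear independence statements and then detect each one by reading off the lowest-order term of a determinant at the cusp $\mathrm{i}\infty$. First I would record the Galois correspondence: by (\ref{Galois}) the extension $\mathbb{C}(X(N))/\mathbb{C}(X^0(N))$ is Galois with group $\overline{H}=\pm\Gamma^0(N)/\pm\Gamma(N)$, and by Proposition \ref{properties} (iv) each conjugate $g^\sigma$ depends only on the class of $\sigma$ in this quotient. Since complete freeness means that $g$ is free in $\mathbb{C}(X(N))/L$ for every intermediate field $L$, and such $L$ correspond to subgroups $H\leq\overline{H}$ via $L=\mathbb{C}(X(N))^H$ with $\mathrm{Gal}(\mathbb{C}(X(N))/L)=H$, it suffices to show that for every subgroup $H=\{\rho_1,\ldots,\rho_n\}$ of $\overline{H}$ the conjugates $g^{\rho_1},\ldots,g^{\rho_n}$ are linearly independent over $\mathbb{C}(X(N))^H$; their number equals the degree $n=[\mathbb{C}(X(N)):\mathbb{C}(X(N))^H]$, so independence already produces a basis.

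The coefficients of a hypothetical dependence relation lie in the fixed field and are themselves nonconstant functions, so a naive term-by-term comparison of $q$-orders is obstructed. To eliminate the coefficients I would use the standard Dedekind reformulation: if $\sum_j c_j g^{\rho_j}=0$ with $c_j\in\mathbb{C}(X(N))^H$, then applying each $\rho_i$ and using $\rho_i(c_j)=c_j$ together with $(g^{\rho_j})^{\rho_i}=g^{\rho_j\rho_i}$ yields $\sum_j c_j g^{\rho_j\rho_i}=0$ for all $i$; a nonzero solution $(c_j)$ forces the group determinant $D=\det\left(g^{\rho_i\rho_j}\right)_{i,j}\in\mathbb{C}(X(N))$ to vanish. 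Hence it is enough to prove $D\neq0$ for each $H$.

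The decisive input is a strict domination at the cusp. Combining Lemma \ref{minimumorder} with the elementary fact $\pm\Gamma^0(N)\cap\pm\Gamma_1(N)=\pm\Gamma(N)$ (upper-right entry $\equiv0$ together with being $\equiv\pm\left[\begin{smallmatrix}1&*\\0&1\end{smallmatrix}\right]$ forces $\sigma\equiv\pm I_2\Mod{N}$), I obtain that $\mathrm{ord}_q~g^\sigma>\mathrm{ord}_q~g$ for every $\sigma\in\Gamma^0(N)\setminus\pm\Gamma(N)$. Thus within any subgroup $H\leq\overline{H}$ the identity conjugate $g$ has strictly smallest $q$-order. Expanding $D=\sum_\pi\mathrm{sgn}(\pi)\prod_i g^{\rho_i\rho_{\pi(i)}}$, the permutation $\pi_0$ determined by $\rho_{\pi_0(i)}=\rho_i^{-1}$ contributes the single term $\pm g^n$, of $q$-order $n\cdot\mathrm{ord}_q~g$ with nonzero leading coefficient, whereas every other $\pi$ has at least one index with $\rho_i\rho_{\pi(i)}\neq1$ and so contributes $q$-order strictly larger than $n\cdot\mathrm{ord}_q~g$. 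The minimal-order term therefore survives without cancellation, so $\mathrm{ord}_q~D=n\cdot\mathrm{ord}_q~g$ is finite and $D\neq0$. This gives the required independence for every $H$, and hence complete freeness.

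The main obstacle is conceptual rather than computational: it is the recognition that the base field must be $\mathbb{C}(X^0(N))$ precisely so that $\pm\Gamma^0(N)\cap\pm\Gamma_1(N)=\pm\Gamma(N)$, which upgrades the weak inequality of Lemma \ref{minimumorder} to a strict domination inside every subgroup and thereby guarantees a unique lowest-order term in $D$. Over the full field $\mathbb{C}(X(1))$ this strictness fails, since nontrivial elements of $\pm\Gamma_1(N)$ share the $q$-order of $g$, so the leading term of $D$ could cancel; there one would instead invoke the finer analytic domination of Lemma \ref{evaluation} at a real point $r\mathrm{i}$ and run the same determinant expansion with absolute values in place of $q$-orders.
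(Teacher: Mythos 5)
Your proposal is correct and takes essentially the same route as the paper's own proof: both reduce complete freeness to the nonvanishing of the group determinant $\det\left(g^{\sigma_j\sigma_i}\right)_{i,j}$ for each subgroup of $\mathrm{Gal}\left(\mathbb{C}(X(N))/\mathbb{C}(X^0(N))\right)$, and both establish this by isolating the unique lowest $q$-order term $\pm g^n$ via Lemma \ref{minimumorder} combined with the intersection fact $\Gamma^0(N)\cap\pm\Gamma_1(N)=\pm\Gamma(N)$. The only cosmetic difference is that the paper additionally invokes Proposition \ref{primitive}, which your observation that the number of conjugates equals the degree $[\mathbb{C}(X(N)):L]$ makes unnecessary for this step.
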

\begin{proof}
Note that $\mathbb{C}(X(N))$ is a Galois extension of
$\mathbb{C}(X^0(N))$ with
\begin{equation*}
\mathrm{Gal}\left(\mathbb{C}(X(N))/\mathbb{C}(X^0(N))\right)
\simeq\Gamma^0(N)/\pm\Gamma(N).
\end{equation*}
It follows from Proposition \ref{primitive} that $g(\tau)$ generates
$\mathbb{C}(X(N))$ over $\mathbb{C}(X^0(N))$.
\par
Now, let $L$ be any intermediate field of $\mathbb{C}(X(N))/
\mathbb{C}(X^0(N))$ with
\begin{equation*}
\mathrm{Gal}\left(\mathbb{C}(X(N))/L)\right)=\{\sigma_1=\mathrm{Id},\,\sigma_2,\,\ldots,\,\sigma_k\}.
\end{equation*}
Since
$\Gamma^0(N)\cap\pm\Gamma_1(N)=\pm\Gamma(N)$,
we must have
\begin{equation}\label{notin}
\sigma_i\not\in\pm\Gamma_1(N)\quad(i=2,\,\ldots,\,k).
\end{equation}
Set
\begin{equation*}
g_i=g(\tau)^{\sigma_i}\quad(i=1,\,2,\ldots,\,k),
\end{equation*}
and suppose that
\begin{equation}\label{linear}
c_1g_1+c_2g_2+\cdots+c_kg_k=0\quad
\textrm{for some}~c_1,\,c_2,\,\ldots,\,c_k\in L.
\end{equation}
Acting each $\sigma_i$ ($i=1,\,2,\,\ldots,\,k$) on both sides of (\ref{linear}) we achieve the system of equations
\begin{equation*}
\left\{\begin{array}{ccc}
c_1g_1^{\sigma_1}+c_2g_2^{\sigma_1}+\cdots+c_kg_k^{\sigma_1}&=&0,\\
c_1g_1^{\sigma_2}+c_2g_2^{\sigma_2}+\cdots+c_kg_k^{\sigma_2}&=&0,\\
&\vdots&\\\
c_1g_1^{\sigma_k}+c_2g_2^{\sigma_k}+\cdots+c_kg_k^{\sigma_k}&=&0,
\end{array}\right.
\end{equation*}
which can be rewritten as
\begin{equation*}
A\begin{bmatrix}c_1\\c_2\\\vdots\\c_k\end{bmatrix}=
\begin{bmatrix}0\\0\\\vdots\\0\end{bmatrix}\quad\textrm{with}~
A=\left[g_j^{\sigma_i}\right]_{1\leq i,\,j\leq k}.
\end{equation*}
Letting $S_k$ be the permutation group on $\{1,\,2,\,\cdots,\,k\}$, we derive that
\begin{eqnarray*}
\det(A)&=&\sum_{j_1j_2\cdots j_k\in S_k}\mathrm{sgn}(j_1j_2\cdots j_k)
g_{j_1}^{\sigma_1}g_{j_2}^{\sigma_2}\cdots g_{j_k}^{\sigma_k}\\
&=&\pm g^k+\sum_{\begin{smallmatrix}j_1j_2\cdots j_k\in S_k~\textrm{such that}\\
\sigma_{j_1}\sigma_{j_2}\cdots\sigma_{j_k}\neq \sigma_1^{-1}\sigma_2^{-1}\cdots\sigma_k^{-1}
\end{smallmatrix}}\pm g^{\sigma_{j_1}\sigma_1}g^{\sigma_{j_2}\sigma_2}
\cdots g^{\sigma_{j_k}\sigma_k}\\
&=&\pm g^k\left(1+\sum_{\begin{smallmatrix}j_1j_2\cdots j_k\in S_k~\textrm{such that}\\
\sigma_{j_1}\sigma_{j_2}\cdots\sigma_{j_k}\neq \sigma_1^{-1}\sigma_2^{-1}\cdots\sigma_k^{-1}
\end{smallmatrix}}\pm\left(\frac{g^{\sigma_{j_1}\sigma_1}}{g}\right)\left(\frac{g^{\sigma_{j_2}\sigma_2}}{g}\right)
\cdots\left(\frac{g^{\sigma_{j_k}\sigma_k}}{g}\right)\right).
\end{eqnarray*}
Observe that for each 
$j_1j_2\cdots j_k\in S_k$ with
$\sigma_{j_1}\sigma_{j_2}\cdots\sigma_{j_k}
\neq\sigma_1^{-1}\sigma_2^{-1}\cdots\sigma_k^{-1}$,
we have 
\begin{equation*}
\sigma_{j_i}\sigma_i\neq\mathrm{Id}\quad\textrm{for some}~1\leq i\leq k.
\end{equation*}
Thus we attain that
\begin{eqnarray*}
\mathrm{ord}_q~\det(A)&=&\mathrm{ord}_q~g^k\quad\textrm{by
(\ref{notin}) and Lemma \ref{minimumorder}}\\
&=&-6kN\left(\ell\mathbf{B}_2(0)+m\mathbf{B}_2(1/N)\right)\quad\textrm{by Proposition \ref{properties} (ii)}\\
&<&0\quad\textrm{by the fact $\ell>m>0$ and Figure \ref{figure1}},
\end{eqnarray*}
which implies that
\begin{equation*}
\det(A)\neq0\quad\textrm{and}\quad c_1=c_2=\cdots=c_k=0.
\end{equation*}
Therefore, $\{g_1,\,g_2,\,\ldots,\,g_k\}$ is linearly independent over $L$;
and hence $g(\tau)$ is completely free in $\mathbb{C}(X(N))/\mathbb{C}(X^0(N))$.
\end{proof}

\begin{theorem}\label{main}
There is a positive integer $M$ for which 
\begin{equation*}
g(\tau)=g_{\left[\begin{smallmatrix}
0\\1/N\end{smallmatrix}\right]}(\tau)^{-12N\ell}
g_{\left[\begin{smallmatrix}
1/N\\0\end{smallmatrix}\right]}(\tau)^{-12Nm}
\end{equation*}
is completely free in $\mathbb{C}(X(N))/\mathbb{C}(X(1))$ for $\ell>m>M$.
\end{theorem}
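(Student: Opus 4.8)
The plan is to run the determinant argument of Theorem \ref{submain} essentially verbatim, but to replace the $q$-order estimate — which is no longer conclusive here — by a pointwise evaluation at a purely imaginary point, supplied by Lemma \ref{evaluation}. Since $g(\tau)$ generates $\mathbb{C}(X(N))$ over $\mathbb{C}(X(1))$ by Proposition \ref{primitive}, to prove complete freeness it suffices to show, for every intermediate field $L$ of $\mathbb{C}(X(N))/\mathbb{C}(X(1))$ with $\mathrm{Gal}(\mathbb{C}(X(N))/L)=\{\sigma_1=\mathrm{Id},\,\sigma_2,\,\ldots,\,\sigma_k\}$, that the conjugates $g_i=g(\tau)^{\sigma_i}$ are linearly independent over $L$; as $k=[\mathbb{C}(X(N)):L]=\dim_L\mathbb{C}(X(N))$, linear independence already forces them to be a normal basis. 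Writing $A=\left[g_j^{\sigma_i}\right]_{1\le i,j\le k}$, a relation $\sum_i c_ig_i=0$ with $c_i\in L$ yields $A\,[c_1,\ldots,c_k]^T=0$ after applying each $\sigma_i$, so it is enough to establish $\det(A)\neq0$.

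First I would expand $\det(A)$ by the Leibniz formula exactly as in Theorem \ref{submain}, factoring out the leading term to write
\[
\det(A)=\pm g^k\left(1+\sum_{\substack{j_1\cdots j_k\in S_k\\ \sigma_{j_1}\cdots\sigma_{j_k}\neq\sigma_1^{-1}\cdots\sigma_k^{-1}}}\pm\prod_{i=1}^{k}\frac{g^{\sigma_{j_i}\sigma_i}}{g}\right),
\]
where each non-leading permutation satisfies $\sigma_{j_i}\sigma_i\neq\mathrm{Id}$ for at least one index $i$. The essential difference from Theorem \ref{submain} is that here $L$ need only contain $\mathbb{C}(X(1))$, so $\mathrm{Gal}(\mathbb{C}(X(N))/L)$ may contain nontrivial elements of $\pm\Gamma_1(N)$. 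Consequently some factors $g^{\sigma_{j_i}\sigma_i}/g$ can have $q$-order exactly $0$ (by the equality case of Lemma \ref{minimumorder}), and the clean $q$-order comparison that forced $\det(A)\neq0$ before breaks down. This is the main obstacle.

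To circumvent it I would evaluate at a point $r\mathrm{i}$ instead of reading off $q$-orders. Fix $n=[\mathbb{C}(X(N)):\mathbb{C}(X(1))]$ and choose $\varepsilon$ with $0<\varepsilon<1/n!$. By Lemma \ref{evaluation} there exist $r\in\mathbb{R}_+$ and an integer $M$ such that, for $m>M$, one has $\left|g^\sigma(r\mathrm{i})/g(r\mathrm{i})\right|<\varepsilon$ for every $\sigma\in\mathrm{SL}_2(\mathbb{Z})\setminus\pm\Gamma(N)$; this $M$ is the one claimed in the statement. Each factor $g^{\sigma_{j_i}\sigma_i}(r\mathrm{i})/g(r\mathrm{i})$ is then either exactly $1$ (when $\sigma_{j_i}\sigma_i\in\pm\Gamma(N)$, since $g$ is $\pm\Gamma(N)$-invariant as an element of $\mathbb{C}(X(N))$) or of modulus $<\varepsilon<1$ (when $\sigma_{j_i}\sigma_i\notin\pm\Gamma(N)$). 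For each non-leading permutation at least one factor is of the second type while all others have modulus $\le1$, so each such product has modulus $<\varepsilon$; since there are at most $k!-1<n!$ of them, the parenthesized quantity at $r\mathrm{i}$ has modulus at least $1-(k!-1)\varepsilon>1-n!\,\varepsilon>0$.

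Finally, because $g(\tau)$ has no zeros on $\mathbb{H}$ by Proposition \ref{properties}(iii), we have $g(r\mathrm{i})^k\neq0$, whence $\det(A)(r\mathrm{i})\neq0$ and therefore $\det(A)\not\equiv0$ as a function. Thus $c_1=\cdots=c_k=0$, the conjugates $\{g_1,\ldots,g_k\}$ form a normal basis of $\mathbb{C}(X(N))/L$, and $g(\tau)$ is completely free in $\mathbb{C}(X(N))/\mathbb{C}(X(1))$. The single choice $\varepsilon<1/n!$ works simultaneously for every intermediate field $L$ because $k\le n$ always, so one common threshold $M$ suffices, exactly as asserted.
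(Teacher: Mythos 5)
Your proposal is correct and takes essentially the same approach as the paper's own proof: both reduce complete freeness to the nonvanishing of the determinant $\det\left[g_j^{\sigma_i}\right]$, expand it by the Leibniz formula, factor out the leading term $\pm g^k$, and invoke Lemma \ref{evaluation} at the point $r\mathrm{i}$ to make every non-leading product of ratios smaller than a threshold ($\varepsilon<1/n!$ in your version, $1/(d!-1)$ in the paper's) so that the leading term dominates. Your explicit observations that the ratios equal $1$ exactly when $\sigma_{j_i}\sigma_i\in\pm\Gamma(N)$ and that $g(r\mathrm{i})\neq0$ by Proposition \ref{properties}(iii) are only minor polish on the published argument, which uses both facts implicitly.
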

\begin{proof}
Let $d=[\mathbb{C}(X(N)):\mathbb{C}(X(1))]$. 
We see from Lemma \ref{evaluation} and (\ref{Galois}) that there exist a positive integer $M$ and $r\in\mathbb{R}_+$ so that
if $\ell>m>M$, then 
\begin{equation}\label{lmm}
\left|\frac{g^{\sigma}(r\mathrm{i})}{g(r\mathrm{i})}\right|~<~\frac{1}{d!-1}\quad
\textrm{for all $\sigma\in\mathrm{Gal}\left(\mathbb{C}(X(N))/\mathbb{C}(X(1))\right)$ with
$\sigma\neq\mathrm{Id}$}. 
\end{equation}
Now, let $\ell>m>M$.
Let $L$ be any intermediate field of $\mathbb{C}(X(N))/\mathbb{C}(X(1))$ with
\begin{equation*}
\mathrm{Gal}(\mathbb{C}(X(N))/L)=\{\sigma_1=\mathrm{Id},\,\sigma_2,\,\ldots,\,\sigma_n\}.
\end{equation*}
Then it follows from Proposition \ref{primitive} that $g(\tau)$ generates
$\mathbb{C}(X(N))$ over $L$.
Consider the $n\times n$ matrix
\begin{equation*}
B=\left[g_j^{\sigma_i}\right]_{1\leq i,\,j\leq n}\quad
\textrm{where}~g_j=g(\tau)^{\sigma_j}.
\end{equation*}
As in Theorem \ref{submain} it suffices to show  $\det(B)\neq0$ in order to 
prove that $\{g_1,\,g_2,\,\ldots,\,g_n\}$ is linearly independent over $L$. 
We derive that
\begin{eqnarray*}
\left|\det(B)(r\mathrm{i})\right|&=&\left|\sum_{j_1j_2\cdots j_n\in S_n}\mathrm{sgn}(j_1j_2\cdots j_n)
g_{j_1}^{\sigma_1}(r\mathrm{i})g_{j_2}^{\sigma_2}(r\mathrm{i})\cdots g_{j_n}^{\sigma_n}(r\mathrm{i})\right|\\
&=&\left|\pm g(r\mathrm{i})^n+\sum_{\begin{smallmatrix}j_1j_2\cdots j_n\in S_n~\textrm{such that}\\
\sigma_{j_1}\sigma_{j_2}\cdots\sigma_{j_n}\neq \sigma_1^{-1}\sigma_2^{-1}\cdots\sigma_n^{-1}
\end{smallmatrix}}\pm g^{\sigma_{j_1}\sigma_1}(r\mathrm{i})g^{\sigma_{j_2}\sigma_2}(r\mathrm{i})
\cdots g^{\sigma_{j_n}\sigma_n}(r\mathrm{i})\right|\\
&\geq&\left|g(r\mathrm{i})\right|^n\left(1-\sum_{\begin{smallmatrix}j_1j_2\cdots j_n\in S_n~\textrm{such that}\\
\sigma_{j_1}\sigma_{j_2}\cdots\sigma_{j_n}\neq \sigma_1^{-1}\sigma_2^{-1}\cdots\sigma_n^{-1}
\end{smallmatrix}}
\left|\frac{g^{\sigma_{j_1}\sigma_1}(r\mathrm{i})}{g(r\mathrm{i})}\right|\left|\frac{g^{\sigma_{j_2}\sigma_2}(r\mathrm{i})}{g(r\mathrm{i})}\right|
\cdots\left|\frac{g^{\sigma_{j_n}\sigma_n}(r\mathrm{i})}{g(r\mathrm{i})}\right|
\right)\\
&\geq&\left|g(r\mathrm{i})\right|^n\left(1-
\sum_{\begin{smallmatrix}j_1j_2\cdots j_n\in S_n~\textrm{such that}\\
\sigma_{j_1}\sigma_{j_2}\cdots\sigma_{j_n}\neq \sigma_1^{-1}\sigma_2^{-1}\cdots\sigma_n^{-1}
\end{smallmatrix}}\frac{1}{d!-1}
\right)\\
&&\textrm{by the fact $\sigma_{j_i}\sigma_i\neq\mathrm{Id}$ for some $1\leq i\leq n$ and (\ref{lmm})}\\
\\
&>&\left|g(r\mathrm{i})\right|^n\left(1-
\frac{n!-1}{d!-1}\right)\\
&\geq&0,
\end{eqnarray*}
which claims $\det(B)\neq0$. 
Therefore,
$g(\tau)$ is completely free in $\mathbb{C}(X(N))/\mathbb{C}(X(1))$, as desired. 
\end{proof}

\bibliographystyle{amsplain}

\address{
Department of Mathematical Sciences \\
KAIST \\
Daejeon 34141\\
Republic of Korea} {jkkoo@math.kaist.ac.kr}
\address{
Department of Mathematics\\
Hankuk University of Foreign Studies\\
Yongin-si, Gyeonggi-do 17035\\
Republic of Korea} {dhshin@hufs.ac.kr}
\address{
Department of Mathematical Sciences \\
KAIST \\
Daejeon 34141\\
Republic of Korea} {math\_dsyoon@kaist.ac.kr}

\end{document}